\newtheorem*{prbs}{Problem $\mathcal{P}_{shallow,\boldsymbol{\theta}}$}
\newtheorem*{prbp}{Problem $\mathcal{P}_{plate}$}
\newtheorem{thm}{Theorem}
\newtheorem{lem}{Lemma}
\newtheorem{rem}{Remark\/}
\newtheorem{rems}[rem]{Remarks\/}
\begin{document}
	
	\title{Asymptotic behavior of a nonlinear shallow shell model when the shell becomes a plate}
	\author{Trung Hieu Giang
		\footnote{corresponding author}
		\footnote{Addresses of Trung Hieu Giang: Department of Mathematical Analysis, Faculty of Mathematics and Physics, Charles University, Praha, Czech Republic.  Institute of Mathematics, Vietnam Academy of Science and Technology, 18 Hoang Quoc Viet, Hanoi, Vietnam. \textit{Email address: trung-hieu.giang@matfyz.cuni.cz}
		} 
		\  and
		Ngoc Quynh Nguyen
		\footnote{Addresses of Ngoc Quynh Nguyen: Department of Mathematics, Hanoi National University of Education, Hanoi, Vietnam. \textit{Email address: ngocquynh.ng25@gmail.com}
	}}
	\date{\today}
	\maketitle
	
	\begin{abstract}
		This paper studies a nonlinear shallow shell model proposed by Donnell, Vlasov, Mushtari, Galimov, and Koiter. More specifically, we address the question concerning the asymptotic behavior of minimizing solutions. Our result can be applied to general applied forces. Thus, it substantially extends the one given in \cite{oana2} whereby the tangential components of the applied forces are assumed to vanish.
	\end{abstract}
	
	\
	
	\noindent 
	{\bf 2020 Mathematics Subject Classification:} 74B20, 35B40, 74K25.
	
	\
	
	\noindent 
	{\bf Keywords:} nonlinear elasticity, asymptotic behavior, equilibrium problems, shallow shells.
	
	% BODY 
	
	\section{Notation}
	\label{s1}
	
	Throughout this paper, Greek indices and exponents range in the set $\{1,2\}$, while Latin indices and exponents range in the set $\{1,2,3\}$, unless they are used for indexing sequences or otherwise specified in the text. The summation convention with respect to repeated indices and exponents is also used. 
	
	Strong and weak convergences in any normed vector space are respectively denoted 
	$\to\text{ and }\rightharpoonup.$
	
	The notation $y=(y_\alpha)$ denotes a generic point in $\mathbb{R}^2$ and partial derivatives, in the classical or distributional sense, are denoted $\partial_\alpha:= \partial/\partial y_\alpha$ and $\partial_{\alpha\beta}:= \partial^2/\partial y_\alpha \partial y_\beta$.
	
	Vector and matrix fields are denoted by boldface letters. The Euclidean norm, the inner product, and the vector product of two vectors $\boldsymbol{u}$ and $\boldsymbol{v}$ in $ \mathbb{R}^3$ are respectively denoted $|\boldsymbol{u}|$, $\boldsymbol{u} \cdot \boldsymbol{v}$, and $\boldsymbol{u} \wedge \boldsymbol{v}$. The notation $(c_{\alpha\beta})$ designates the $2\times 2$ matrix whose component at its $\alpha$-row and $\beta$-column is $c_{\alpha\beta}$.
	
	The usual norm of the Lebesgue space $L^p (\omega)$, $1 \leq p \leq \infty$, is denoted by $\| \cdot \|_{L^p(\omega)}$. The notation $L^p (\omega; \mathbb{R}^{m\times n})$ denotes, for every positive integer $m$ and $n$, the space of matrix fields $\boldsymbol{A}=(A_{ij}): \omega \to \mathbb{R}^{m \times n}$ with components $A_{ij}$ in the Lebesgue space $L^p(\omega)$. This space of matrix-valued functions is equipped with the norm
	\begin{equation*}
		\|\boldsymbol{A}\|_{L^p(\omega)} := \Big( \sum\limits_{i=1}^m\sum\limits_{j=1}^n \|A_{ij}\|^p_{L^p(\omega)}\Big)^{1/p} \textrm{ for all } \boldsymbol{A} \in L^p (\omega; \mathbb{R}^{m\times n}).
	\end{equation*}
	Note that we use the same notation for the norm of scalar, vector, and matrix-valued functions.
	
	The usual norm of the Sobolev space $W^{m,p}(\omega)$, $m \in \mathbb{N}^*$, $1 \leq p \leq \infty$, is denoted by $\| \cdot \|_{W^{m,p}(\omega)}$. The notations $H^1(\omega)$ and $H^2(\omega)$ respectively denote the spaces $W^{1,2}(\omega)$ and $W^{2,2}(\omega)$. We also denote by $H^1_0 (\omega)$ and $H^2_0 (\omega)$ respectively the closures of $\mathcal{D}(\omega)$ in $H^1(\omega)$ and $H^2(\omega)$.
	
	In this paper, by shell we mean a three-dimensional body whose natural state (that is, a stress-free configuration of the body) is a ``surface with positive thickness'' in the three-dimensional Euclidean space. This means that a shell is a subset of ${\mathbb{R}}^3$ of the form
	$$
	\overline{\boldsymbol{\Theta}(\Omega)},
	$$
	where 
	$$
	\Omega:=\omega\times (-\varepsilon, \varepsilon)
	$$
	and 
	$$
	\boldsymbol{\Theta} (y, x_3) := \boldsymbol{\theta}(y) + x_3 \boldsymbol{a}_{3,\boldsymbol{\theta}}(y), \ (y,x_3)\in\Omega,
	$$
	for some given bounded and connected open set $\omega\subset{\mathbb{R}}^2$ whose boundary is Lipschitz-continuous in the sense of Adams \& Fournier \cite{ada}, real number $0<\varepsilon\leq \varepsilon_0$, and immersion $\boldsymbol{\theta}: \overline{\omega} \to \mathbb{R}^3$ of class ${\mathcal{C}}^2$ with a unit vector field $\boldsymbol{a}_{3,\boldsymbol{\theta}}:\overline{\omega} \to \mathbb{R}^3$ defined by
	\begin{equation}
		\label{s1e1}
		\boldsymbol{a}_{3,\boldsymbol{\theta}}(y) := \dfrac{{\partial}_1\boldsymbol{\theta}(y) \wedge {\partial}_2\boldsymbol{\theta}(y)}{\left|{\partial}_1\boldsymbol{\theta}(y) \wedge {\partial}_2\boldsymbol{\theta}(y) \right|} \textrm{ for all } y\in\overline{\omega}.
	\end{equation}
	
	The assumption that $\boldsymbol{\theta}$ is an immersion of class ${\mathcal{C}}^2$ means that $\boldsymbol{\theta}\in{\mathcal{C}}^2(\omega; {\mathbb{R}}^3)$ and the two vector fields $\boldsymbol{a}_{\alpha,\boldsymbol{\theta}}:={\partial}_\alpha\boldsymbol{\theta}$ are linearly independent at every $y\in\omega$, so that the unit normal vector field $\boldsymbol{a}_{3,\boldsymbol{\theta}}$ in \eqref{s1e1} is well defined and of class $\mathcal{C}^1$.
	
	The real number $\varepsilon>0$ represents the half-thickness of the shell, and $S:=\boldsymbol{\theta}(\overline{\omega})$ represents the middle surface of the shell. The upper bound $\varepsilon_0$ of $\varepsilon$ is chosen sufficiently small compared with the diameter of $\omega$, so that $\boldsymbol{\Theta}$ is itself an immersion of class ${\mathcal{C}}^1$ from $\overline{\Omega}$ into $\mathbb{R}^3$ (the existence of such a small $\varepsilon_0$ is proved in \cite{ciarlet-G}).
	
	Note that $\boldsymbol{a}_{3,\boldsymbol{\theta}}(y)$ is the positively oriented unit vector normal to the surface $S$ at the point $\boldsymbol{\theta}(y)$, while the two vectors $\boldsymbol{a}_{\alpha,\boldsymbol{\theta}} (y)$ form a basis in the tangent plane to $S$ at the same point. The three vectors $\boldsymbol{a}_{i,\boldsymbol{\theta}} (y)$ constitute the \textit{covariant basis} in $\mathbb{R}^3$ at the point $\boldsymbol{\theta}(y)$, while the three vectors $\boldsymbol{a}^i_{\boldsymbol{\theta}} (y)$ uniquely defined by the relations
	$$
	\boldsymbol{a}^i_{\boldsymbol{\theta}} (y) \cdot \boldsymbol{a}_{j,\boldsymbol{\theta}} (y) = \delta^i_j,
	$$
	where $\delta^i_j$ is the Kronecker symbol, constitute the \textit{contravariant basis} at the point $\boldsymbol{\theta}(y) \in S$. As a consequence, any vector field $\boldsymbol{\zeta} : \omega \to \mathbb{R}^3$ can be decomposed over either of these bases as
	$$
	\boldsymbol{\zeta} = \zeta_i \boldsymbol{a}^i_{\boldsymbol{\theta}} = \zeta^i \boldsymbol{a}_{i,\boldsymbol{\theta}},
	$$
	for some functions $\zeta_i: \omega \to \mathbb{R}$ and $\zeta^i : \omega \to \mathbb{R}$.
	
	The area element of the surface $S$ is denoted by $\sqrt{a_{\boldsymbol{\theta}}(y)}dy$, where
	\begin{equation}\label{s1e2}
		a_{\boldsymbol{\theta}}:=|\partial_1 \boldsymbol{\theta} \wedge \partial_2 \boldsymbol{\theta}|^2 \text{ in } \omega.  
	\end{equation}
	
	The covariant components $a_{\alpha\beta,\boldsymbol{\theta}} \in \mathcal{C}^1 (\overline{\omega})$ and $b_{\alpha\beta,\boldsymbol{\theta}} \in \mathcal{C}^0 (\overline{\omega})$ of respectively the first and second fundamental forms of $S = \boldsymbol{\theta} (\overline{\omega})$ are defined by 
	$$a_{\alpha\beta,\boldsymbol{\theta}} := \boldsymbol{a}_{\alpha,\boldsymbol{\theta}} \cdot \boldsymbol{a}_{\beta,\boldsymbol{\theta}} \textrm{ and }  b_{\alpha\beta,\boldsymbol{\theta}} :=\boldsymbol{a}_{3,\boldsymbol{\theta}} \cdot \partial_{\alpha\beta}\boldsymbol{\theta} \textrm{ in } \overline{\omega}.$$
	
	The contravariant components of the first fundamental form are the components $a^{\alpha\beta}_{\boldsymbol{\theta}} \in \mathcal{C}^1 (\overline{\omega})$ of the inverse matrix 
	$$
	(a^{\alpha\beta}_{\boldsymbol{\theta}}(y)):=(a_{\alpha\beta,\boldsymbol{\theta}}(y))^{-1}, \ y\in\overline{\omega}. 
	$$
	
	The Christoffel symbols $\Gamma^\sigma_{\alpha\beta,\boldsymbol{\theta}} \in \mathcal{C}^0 (\overline{\omega})$ of $S$ are defined by
	$$
	\Gamma^\sigma_{\alpha\beta,\boldsymbol{\theta}} := \boldsymbol{a}^\sigma_{\boldsymbol{\theta}} \cdot \partial_\beta \boldsymbol{a}_{\alpha,\boldsymbol{\theta}} \textrm{ in } \overline{\omega}.
	$$
	
	The Gaussian curvature of the surface $S=\boldsymbol{\theta}(\overline{\omega})$ is defined by 
	$$
	K_{\boldsymbol{\theta}}:=\det \left(a^{\alpha\sigma}_{\boldsymbol{\theta}}b_{\sigma\beta, \boldsymbol{\theta}}\right) \in {\mathcal{C}}^0(\overline{\omega}).
	$$
	
	%Given a vector field $\eta_i \boldsymbol{a}^i: \omega \to \mathbb{R}^3$  along the surface $S = \boldsymbol{\theta}(\overline{\omega})$, we also define (provided the covariant components $\eta_i: \omega \rightarrow \mathbb{R}$ of this field are sufficiently regular) the following covariant derivatives:  
	%$$
	%\eta_{\alpha \mid \beta} := \partial_\beta \eta_{\alpha} - \Gamma^\sigma_{\beta \alpha} \eta_\sigma \quad \text{and} \quad \eta_{3 \mid \alpha} := \partial_\alpha \eta_3,
	%$$  
	%$$
	%\eta_{\alpha \mid \beta \sigma} := \partial_\sigma (\eta_{\alpha \mid \beta}) - \Gamma^\tau_{\alpha \sigma} \eta_{\tau \mid \beta} - \Gamma^\tau_{\beta \sigma} \eta_{\alpha \mid \tau} = (\eta_{\alpha \mid \beta})_{|\sigma},
	%$$  
	%$$
	%\eta_{3 \mid \alpha \beta} := (\eta_{3 \mid \alpha})_{|\beta} = \partial_{\alpha \beta} \eta_3 - \Gamma^\sigma_{\alpha \beta} \partial_\sigma \eta_3.
	%$$
	
	\section{The nonlinear shallow shell model}
	\label{s2}
	
	The lower-dimensional elasticity theories play an important role in mathematical analysis and numerical computations. On the one hand, they allow simpler computational approaches; on the other hand, they often possess a richer variety of mathematical results. In recent decades, formal asymptotic analysis, based on various additional mathematical and mechanical assumptions, has led to a significant number of different two-dimensional shell theories, such as the nonlinear membrane shell, flexural shell, Koiter shell, etc. (for a more complete list of shell theories, we refer to \cite{cvol3} and the references therein). However, to date, many theoretical aspects of these models are still not fully understood.
	
	This paper is concerned with a nonlinear shallow shell model, which is a two-dimensional theory proposed by  L.H. Donnell, V.Z. Vlasov, K.M. Mushtari \& K.Z. Galimov, and W.T. Koiter \cite{don, koi, mush, vla}. In this context, a shell is called ``shallow" when the Gaussian curvature $K_{\boldsymbol{\theta}}$ of its middle surface is sufficiently small. The nonlinear shallow shell model is applied to the shells made of an elastic material whose behavior is modeled by two elastic coefficients, denoted $\lambda\geq0$ and $\mu>0$, via the two-dimensional elasticity tensor $(a_{\boldsymbol{\theta}}^{\alpha\beta\sigma\tau})$, where
	\begin{equation}
		\label{s2e1}
		a_{\boldsymbol{\theta}}^{\alpha\beta\sigma\tau} := \dfrac{4\lambda\mu}{\lambda+2\mu} a_{\boldsymbol{\theta}}^{\alpha\beta}a_{\boldsymbol{\theta}}^{\sigma\tau} + 2\mu (a_{\boldsymbol{\theta}}^{\alpha\sigma}a_{\boldsymbol{\theta}}^{\beta\tau}+a_{\boldsymbol{\theta}}^{\alpha\tau}a_{\boldsymbol{\theta}}^{\beta\sigma}).
	\end{equation}
	Notice that this tensor is positive-definite in the sense that there exists a constant $c_e (\omega, \boldsymbol{\theta}, \lambda, \mu) > 0$ such that
	\begin{equation}\label{s2e2}
		c_e(\omega, \boldsymbol{\theta}, \lambda, \mu)\sum\limits_{\alpha,\beta} |t_{\alpha\beta}|^2 \leq  a_{\boldsymbol{\theta}}^{\alpha\beta\sigma\tau}(y) t_{\sigma\tau}t_{\alpha\beta},
	\end{equation}
	for all $y \in \overline{\omega}$ and all symmetric matrices $(t_{\alpha\beta})$ (see, e.g., \cite[Theorem 3.3-2]{cvol3}).
	
	For simplicity, our analysis in this paper is restricted to the case of totally clamped shells. Then, the nonlinear shallow shell model is often formulated as a minimization problem as follows: 
	
	\begin{prbs}\label{shallow}
		Find $\boldsymbol{u} = (u_i) \in \boldsymbol{V}(\omega)$ that minimizes the energy
		\begin{align}\label{s2e3}
			\nonumber
			J_{\boldsymbol{\theta}}(\boldsymbol{\eta}) := &\frac{1}{2} \int\limits_{\omega} \Big\{  \frac{\varepsilon^3}{3} a_{\boldsymbol{\theta}}^{\alpha \beta \sigma \tau} F_{\sigma\tau}^{\boldsymbol{\theta}}(\boldsymbol{\eta})F_{\alpha\beta}^{\boldsymbol{\theta}}(\boldsymbol{\eta}) + \varepsilon a_{\boldsymbol{\theta}}^{\alpha \beta \sigma \tau} E^{\boldsymbol{\theta}}_{\sigma\tau}(\boldsymbol{\eta})E^{\boldsymbol{\theta}}_{\alpha\beta}(\boldsymbol{\eta}) \Big\} \sqrt{a_{\boldsymbol{\theta}}} \, dy \\
			& - \int\limits_{\omega} p^i \eta_i \sqrt{a_{\boldsymbol{\theta}}} \, dy, \textrm{ for every } \boldsymbol{\eta} \in \boldsymbol{V}(\omega),
		\end{align}
		where 
		$$
		\boldsymbol{V}(\omega) := H^1_0(\omega) \times H^1_0(\omega) \times H^2_0(\omega),
		$$
		$$
		E^{\boldsymbol{\theta}}_{\alpha\beta}(\boldsymbol{\eta}) := \dfrac{1}{2} \big(\partial_\beta\eta_{\alpha} + \partial_\alpha\eta_{\beta} \big) - \Gamma^\sigma_{\alpha\beta,\boldsymbol{\theta}}\eta_\sigma - b_{\alpha\beta,\boldsymbol{\theta}}\eta_3 + \dfrac{1}{2} \partial_\alpha\eta_{3}\partial_\beta\eta_{3},
		$$
		$$
		F^{\boldsymbol{\theta}}_{\alpha\beta}(\boldsymbol{\eta}) := \partial_{\alpha\beta} \eta_3 - \Gamma^\sigma_{\alpha\beta,\boldsymbol{\theta}} \partial_\sigma \eta_3,
		$$
		and the functions $p_i \in L^2(\omega)$ are the components of the density $\boldsymbol{p} = (p_i)$ of the applied force. Here, each admissible field $\boldsymbol{\eta} = (\eta_i) \in \boldsymbol{V}(\omega)$ corresponds to an admissible displacement $\eta_i \boldsymbol{a}^i_{\boldsymbol{\theta}}$ of the middle surface $\boldsymbol{\theta}(\overline{\omega})$.  
	\end{prbs} 
	
	Over the last several decades, this nonlinear shallow shell model has been studied by several authors, with a primary focus on the existence, uniqueness, and asymptotic behavior of solutions. Regarding the Euler-Lagrange equation associated with the functional $J_{\boldsymbol{\theta}}$, by using the theory of pseudo-monotone operators, Bernadou \& Oden \cite{bern} have established the existence of a weak solution if the tangential components of the applied forces and the curvature of the middle surface are sufficiently small. They also proved that the weak solution found is unique if the applied forces are sufficiently small. Figueiredo \cite{figueiredo1} has obtained a local existence theorem for the boundary value problem associated with problem \hyperref[shallow]{$\mathcal{P}_{shallow, \boldsymbol{\theta}}$} using the Inverse Function Theorem, provided that the applied forces are sufficiently small. Iosifescu \cite{oana1} has shown that this model has at least one solution when the applied tangential components of the applied forces vanish by introducing an Airy stress function in curvilinear coordinates and using the same method as Ciarlet \& Rabier \cite{ciarlet2} for establishing the existence of a solution to von K\'arm\'an equations. Additionally, under the same assumptions, in \cite{oana2}, she also studied the behavior of the solutions to this model when the shell becomes a plate. Regarding the minimization problem \hyperref[shallow]{$\mathcal{P}_{shallow, \boldsymbol{\theta}}$}, Ciarlet \& Iosifescu \cite{ciarlet2} via an intrinsic approach have established the existence of a minimizer under the additional assumptions that the Gaussian curvature and the applied forces are small enough. However, we would like to emphasize that the above results rely on various additional assumptions on the applied forces, and thus, the case of general forces remains an open question. 
	
	By contrast, an existence theorem has been established by Rabier \cite{rab} for general applied forces in the special case where 
	$$
	\boldsymbol{\theta} (y) = (y,0) \textrm{ in } \overline{\omega},
	$$
	i.e., when the shell is a plate. In particular, it can be shown that the following minimization problem has at least one minimizer:
	
	\begin{prbp}\label{plate}
		Find $\boldsymbol{u} = (u_i) \in \boldsymbol{V}(\omega)$ that minimizes the energy
		\begin{align}\label{s2e4}
			\nonumber
			J(\boldsymbol{\eta}) := &\frac{1}{2} \int\limits_{\omega} \Big\{  \frac{\varepsilon^3}{3} a_0^{\alpha \beta \sigma \tau} \partial_{\sigma\tau}\eta_3 \partial_{\alpha\beta}\eta_3 + \varepsilon a_0^{\alpha \beta \sigma \tau} E^0_{\sigma\tau}(\boldsymbol{\eta})E^0_{\alpha\beta}(\boldsymbol{\eta}) \Big\}  \, dy \\
			& - \int\limits_{\omega} p^i \eta_i  \, dy, \textrm{ for every } \boldsymbol{\eta} \in \boldsymbol{V}(\omega),
		\end{align}
		where 
		$$
		a_0^{\alpha\beta\sigma\tau} := \dfrac{4\lambda\mu}{\lambda+2\mu} \delta^{\alpha\beta}\delta^{\sigma\tau} + 2\mu (\delta^{\alpha\sigma}a^{\beta\tau}+\delta^{\alpha\tau}\delta^{\beta\sigma}),
		$$
		where $\delta^{\alpha\beta}$ is the Kronecker symbol, and
		$$
		E^0_{\alpha\beta}(\boldsymbol{\eta}) := \dfrac{1}{2}(\partial_\alpha \eta_\beta + \partial_\beta \eta_\alpha) + \dfrac{1}{2}\partial_\alpha \eta_3 \partial_\beta \eta_3.
		$$
	\end{prbp} 
	It can be easily seen that this is exactly the formulation for the nonlinearly elastic plate model (cf. Ciarlet \cite{cvol2}). Thus, the existence of a minimizer to the problem \hyperref[plate]{$\mathcal{P}_{plate}$} can be achieved via a crucial ingredient, namely, the rigidity of the plates (see \cite{rab} for the case of the clamped plates, and also see \cite{giamar} for more general boundary conditions). This ingredient will also play an important role in our analysis, and it will be fully presented in the next section.
	
	In this paper, we aim to answer the question of the asymptotic behavior of the problem \hyperref[shallow]{$\mathcal{P}_{shallow, \boldsymbol{\theta}}$} without any restriction on the applied forces. In particular, we will show that, under the impact of any arbitrary applied force, when the shallow shell is close enough to the plate, the problem \hyperref[shallow]{$\mathcal{P}_{shallow, \boldsymbol{\theta}}$} converges in a suitable sense to problem \hyperref[plate]{$\mathcal{P}_{plate}$}. Therefore, our result considerably extends the one given by Iosifescu \cite{oana2}.
	
	\section{Main result}
	\label{s3}
	
	In this section, we will study the asymptotic behavior of the problem \hyperref[shallow]{$\mathcal{P}_{shallow, \boldsymbol{\theta}}$} when the shell becomes a plate. To begin with, we need the following, which is the rigidity property of plates:
	
	\begin{lem}[The rigidity property of plates]
		\label{s3l1}
		Let $\omega\subset {\mathbb{R}}^2$ be a bounded and connected open set with Lipschitz-continuous boundary. Then the set $\boldsymbol{V}(\omega)$ satisfies the following rigidity property:
		\begin{equation}\label{s3e1}\aligned{} 
			{\boldsymbol{u}} \in \boldsymbol{V}(\omega) \text{ and } (E^0_{\alpha\beta}({\boldsymbol{u}}))={\boldsymbol{0}} \text{ in } \omega \text{ imply } {\boldsymbol{u}}={\boldsymbol{0}} \text{ in } \omega.
			\endaligned\end{equation}
	\end{lem}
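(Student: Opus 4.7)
The plan is to exploit the fact that taking the trace (in $\alpha,\beta$) of the identity $E^0_{\alpha\beta}(\boldsymbol{u}) = 0$ produces a pointwise relation between $\operatorname{div}(u_1,u_2)$ and $|\nabla u_3|^2$, and a single integration over $\omega$ will then force $u_3 \equiv 0$ via the clamped boundary conditions. Explicitly, summing $E^0_{11}(\boldsymbol{u}) = 0$ and $E^0_{22}(\boldsymbol{u}) = 0$ yields, pointwise almost everywhere in $\omega$,
\[
\partial_1 u_1 + \partial_2 u_2 = -\tfrac{1}{2}\lvert\nabla u_3\rvert^2.
\]
Since $u_\alpha \in H^1_0(\omega)$, the left-hand side lies in $L^2(\omega)$; since $u_3 \in H^2_0(\omega)$, the two-dimensional Sobolev embedding gives $\nabla u_3 \in L^p(\omega;\mathbb{R}^2)$ for every $p<\infty$, so the right-hand side is in $L^1(\omega)$ and both sides are perfectly integrable.

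Next, I would integrate this identity over $\omega$. Because $(u_1,u_2) \in H^1_0(\omega;\mathbb{R}^2)$, the divergence integrates to zero (by density of $\mathcal{D}(\omega)$ in $H^1_0(\omega)$), hence
\[
0 \;=\; \int_\omega (\partial_1 u_1 + \partial_2 u_2)\, dy \;=\; -\tfrac{1}{2}\int_\omega \lvert\nabla u_3\rvert^2\, dy,
\]
which forces $\nabla u_3 = \boldsymbol{0}$ a.e. in $\omega$. Since $u_3 \in H^2_0(\omega) \subset H^1_0(\omega)$ already has vanishing trace on $\partial\omega$, this yields $u_3 = 0$ in $\omega$.

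With $u_3 = 0$ in hand, the remaining hypothesis $E^0_{\alpha\beta}(\boldsymbol{u}) = 0$ collapses to $\partial_\alpha u_\beta + \partial_\beta u_\alpha = 0$ for all $\alpha,\beta$, i.e.\ the symmetrized gradient of $(u_1,u_2) \in H^1_0(\omega;\mathbb{R}^2)$ vanishes. Applying Korn's inequality on $H^1_0$ (or, equivalently, invoking the classical fact that a vector field with vanishing symmetric gradient on a connected open set is an affine infinitesimal rigid motion $\boldsymbol{a} + A y$ with $A$ skew, combined with the vanishing trace on $\partial\omega$) then gives $(u_1,u_2) = \boldsymbol{0}$ in $\omega$, completing the proof. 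I do not anticipate any serious obstacle: the argument is essentially a one-line divergence trick, and the only technical point worth checking is the regularity justification for integrating the pointwise identity, which follows immediately from the ambient Sobolev framework.
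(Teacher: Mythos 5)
Your proof is correct and follows essentially the same route as the paper's: integrate the relations $E^0_{\alpha\beta}(\boldsymbol{u})=0$ over $\omega$, use the vanishing boundary values of $u_\alpha$ to eliminate the linear (divergence) terms and deduce $\int_\omega |\nabla u_3|^2\,dy=0$, hence $u_3=0$; then conclude from the vanishing symmetric gradient of $(u_1,u_2)$ and the clamped boundary conditions that $u_\alpha=0$. The only cosmetic difference is that you take the trace before integrating while the paper integrates each component $E^0_{\alpha\beta}(\boldsymbol{u})$ separately and then specializes to $\alpha=\beta$, and in the final step you invoke Korn's inequality (or the rigid-motion classification) where the paper differentiates once more to show the $u_\alpha$ are affine and thus vanish; both are standard finishing moves and lead to the same conclusion.
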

	
	\begin{proof}
		The proof of this property is first given by Rabier \cite{rab}, and then by Giang \& Mardare \cite{giamar} with a simpler approach. For readers' convenience, we will present the details here. We will follow the proof given in \cite{giamar}.
		
		Let ${\boldsymbol{u}}\in \boldsymbol{V}(\omega)$ satisfy $(E^0_{\alpha\beta}({\boldsymbol{u}}))={\boldsymbol{0}}$ in $\omega$. Using the boundary conditions satisfied by $u_{\alpha}$ and integrating by parts, we deduce from $(E^0_{\alpha\beta}({\boldsymbol{u}}))={\boldsymbol{0}}$ that 
		\begin{equation*}
			\int_\omega \partial_\alpha u_3\partial_\beta u_3 dy=0.
		\end{equation*}
		In particular,  
		\begin{equation*}
			\int_\omega (\partial_\alpha u_3)^2dy=0,
		\end{equation*}
		which, combined with the boundary conditions satisfied by $u_3$, implies that
		\begin{equation*}
			u_3 = 0 \textrm{ in } \omega.
		\end{equation*}
		
		Consequently, the relation $(E^0_{\alpha\beta}({\boldsymbol{u}}))={\boldsymbol{0}}$ in $\omega$ implies that  
		\begin{equation*}
			\partial_\beta u_\alpha + \partial_\alpha u_\beta = 0 \textrm{ in } \omega,
		\end{equation*}
		which in turn implies that  
		\begin{equation*}
			\partial_{\alpha\beta}u_\sigma=0  \textrm{ in } \omega.  
		\end{equation*}
		The set $\omega$ being connected, the functions $u_\sigma$ must be affine. Since they vanish on the entire boundary of a bounded set, necessarily at three non-collinear points, they must vanish in $\omega$:  
		\begin{equation*}
			u_\alpha = 0 \textrm{ in } \omega.
		\end{equation*}
		Our proof is complete.		
	\end{proof}
	
	\begin{rem}
		\label{s3r1}
		The above rigidity property plays a crucial role in establishing an existence theorem for problem \hyperref[plate]{$\mathcal{P}_{plate}$} as it is a sufficient condition for the weak coercivity of the functional $J$ (see \cite{rab, giamar}). It is therefore natural to inquire whether an analogous property holds in the case of shallow shells. In particular, if one could prove that $\boldsymbol{u} \in \boldsymbol{V}(\omega)$ and
		\begin{equation}\label{s3e2}
			\dfrac{1}{2} (\partial_\beta u_{\alpha} + \partial_\alpha u_{\beta }) - \Gamma^\sigma_{\alpha\beta,\boldsymbol{\theta}} \eta_\sigma + \dfrac{1}{2}\partial_\alpha u_{3 }\partial_\beta u_{3} =0 \text{ in } \omega, \textrm{ for all } \alpha, \, \beta,
		\end{equation}
		imply $\boldsymbol{u}=\boldsymbol{0}$ in $\omega$, then an existence result for the nonlinear shallow shell model under general applied forces would follow in a manner similar to the case of plates. To date, however, this remains an open question.
	\end{rem}
	
	We also recall the following, which is the sequential weak lower semicontinuity of the functional $J_{\boldsymbol{\theta}}$:
	\begin{lem}
		\label{s3l2}
		Let $\omega\subset{\mathbb{R}}^2$ be a bounded and connected open set whose boundary is Lipschitz-continuous and let $\boldsymbol{\theta}$ be an immersion of class $\mathcal{C}^2$.  Then the functional $J_{\boldsymbol{\theta}}: \boldsymbol{V}(\omega) \to{\mathbb{R}}$ is sequentially weakly lower semicontinuous.
	\end{lem}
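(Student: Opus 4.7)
The plan is to show that if $\boldsymbol{\eta}^n \rightharpoonup \boldsymbol{\eta}$ in $\boldsymbol{V}(\omega) = H^1_0(\omega) \times H^1_0(\omega) \times H^2_0(\omega)$, then $\liminf_{n\to\infty} J_{\boldsymbol{\theta}}(\boldsymbol{\eta}^n) \ge J_{\boldsymbol{\theta}}(\boldsymbol{\eta})$. I would split $J_{\boldsymbol{\theta}}$ into three pieces: a bending term quadratic in $F^{\boldsymbol{\theta}}$, a membrane term quadratic in $E^{\boldsymbol{\theta}}$, and a linear load term. The overall strategy is to show that the two quadratic integrands are nonnegative convex quadratic forms evaluated on tensor fields that converge \emph{weakly} in $L^2(\omega)$; then sequential weak lower semicontinuity of each piece will be automatic from convexity plus $L^2$-continuity, while the load term will turn out to be continuous under weak convergence.

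First I would use Rellich--Kondrachov together with the two-dimensional Sobolev embedding $H^2(\omega) \hookrightarrow W^{1,q}(\omega)$ (compactly, for every $q<\infty$) to extract the strong convergences
\[
\eta_\alpha^n \to \eta_\alpha \text{ in } L^q(\omega), \qquad \eta_3^n \to \eta_3 \text{ in } W^{1,q}(\omega), \qquad \text{for every } q<\infty,
\]
while $\partial_\beta \eta_\alpha^n \rightharpoonup \partial_\beta \eta_\alpha$ and $\partial_{\alpha\beta}\eta_3^n \rightharpoonup \partial_{\alpha\beta}\eta_3$ only weakly in $L^2(\omega)$. The load term passes to the limit because $p^i\in L^2(\omega)$ and $\sqrt{a_{\boldsymbol{\theta}}}\in\mathcal{C}^0(\overline{\omega})$. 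For the bending tensor, using the $\mathcal{C}^0$-regularity of $\Gamma^\sigma_{\alpha\beta,\boldsymbol{\theta}}$ one obtains at once $F^{\boldsymbol{\theta}}_{\alpha\beta}(\boldsymbol{\eta}^n) \rightharpoonup F^{\boldsymbol{\theta}}_{\alpha\beta}(\boldsymbol{\eta})$ in $L^2(\omega)$, because the leading term $\partial_{\alpha\beta}\eta_3^n$ converges only weakly and the first-order correction converges strongly.

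The main obstacle is the quadratic term $\tfrac12 \partial_\alpha \eta_3^n\,\partial_\beta \eta_3^n$ in $E^{\boldsymbol{\theta}}_{\alpha\beta}(\boldsymbol{\eta}^n)$, which is the only genuinely nonlinear contribution. I would handle it by invoking the strong convergence $\partial_\alpha \eta_3^n \to \partial_\alpha \eta_3$ in $L^4(\omega)$ already obtained; Cauchy--Schwarz then gives strong convergence of the product in $L^2(\omega)$. Combined with the weak $L^2$-convergence of $\tfrac12(\partial_\alpha \eta_\beta^n + \partial_\beta \eta_\alpha^n)$ and the strong $L^2$-convergence of $\Gamma^\sigma_{\alpha\beta,\boldsymbol{\theta}}\eta_\sigma^n$ and $b_{\alpha\beta,\boldsymbol{\theta}}\eta_3^n$ (using that $\Gamma^\sigma_{\alpha\beta,\boldsymbol{\theta}},b_{\alpha\beta,\boldsymbol{\theta}}\in\mathcal{C}^0(\overline{\omega})$), this yields $E^{\boldsymbol{\theta}}_{\alpha\beta}(\boldsymbol{\eta}^n) \rightharpoonup E^{\boldsymbol{\theta}}_{\alpha\beta}(\boldsymbol{\eta})$ in $L^2(\omega)$.

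Finally I would conclude by a standard convexity argument. By the positive-definiteness inequality \eqref{s2e2} and the symmetry of $F^{\boldsymbol{\theta}}$ and $E^{\boldsymbol{\theta}}$, each quadratic map
\[
(T_{\alpha\beta}) \longmapsto \int_\omega a_{\boldsymbol{\theta}}^{\alpha\beta\sigma\tau} T_{\sigma\tau} T_{\alpha\beta}\,\sqrt{a_{\boldsymbol{\theta}}}\,dy
\]
is a nonnegative, hence convex, norm-continuous functional on the closed subspace of symmetric $(2\times 2)$-tensor fields in $L^2(\omega)$. Convex continuous functionals on a reflexive Banach space are sequentially weakly lower semicontinuous; applying this to the weakly convergent sequences $F^{\boldsymbol{\theta}}(\boldsymbol{\eta}^n)$ and $E^{\boldsymbol{\theta}}(\boldsymbol{\eta}^n)$ and combining with the continuity of the load term yields the desired inequality $\liminf_n J_{\boldsymbol{\theta}}(\boldsymbol{\eta}^n) \ge J_{\boldsymbol{\theta}}(\boldsymbol{\eta})$.
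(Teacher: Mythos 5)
Your proposal is correct and follows essentially the same approach as the paper: both use the compact embeddings $H^1_0(\omega)\hookrightarrow L^2(\omega)$ and $H^2_0(\omega)\hookrightarrow W^{1,4}(\omega)$ to obtain strong convergence of the lower-order and nonlinear product terms, deduce weak $L^2$-convergence of $E^{\boldsymbol{\theta}}_{\alpha\beta}(\boldsymbol{\eta}^n)$ and $F^{\boldsymbol{\theta}}_{\alpha\beta}(\boldsymbol{\eta}^n)$, and then invoke the positive-definiteness of $(a_{\boldsymbol{\theta}}^{\alpha\beta\sigma\tau})$ to conclude via weak lower semicontinuity of the quadratic form. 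The only difference is that you spell out the final convexity step explicitly, which the paper summarizes as ``it is easy to see.''
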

	
	\begin{proof}
		The proof is similar to the one for the nonlinearly elastic plate model (see, for example, \cite{cvol2}). We will present it here for readers' convenience. Since $\omega$ is a bounded Lipschitz domain in ${\mathbb{R}}^2$, the embeddings $H^2_0(\omega) \hookrightarrow W^{1,4}(\omega)$ and $H^1_0(\omega) \hookrightarrow L^2(\omega)$ are compact; cf. Adams \& Fournier \cite{ada}. Then, if a sequence ${\boldsymbol{u}}^n=(u^n_i)$, $n\in{\mathbb{N}}$, and an element ${\boldsymbol{u}}=(u_i)$ in the space $\boldsymbol{V}(\omega)$ satisfy  
		\begin{equation*}
			u_\alpha^n \rightharpoonup u_\alpha \text{ \ in } H^1_0(\omega) \text{ as } n\to \infty  
		\end{equation*}
		and  
		\begin{equation*}
			u_3^n \rightharpoonup u_3 \text{ \ in } H^2_0(\omega) \text{ as } n\to \infty,
		\end{equation*}
		then  
		\begin{equation*}
			u_\alpha^n \to u_\alpha \text{ \ in } L^2(\omega) \text{ when } n\to \infty.
		\end{equation*}
		and
		\begin{equation*}
			u_3^n \to u_3 \text{ \ in } W^{1,4}(\omega) \text{ when } n\to \infty.
		\end{equation*}
		
		Consequently,
		\begin{equation*}\aligned{}
			E^{\boldsymbol{\theta}}_{\alpha\beta}({\boldsymbol{u}}^n)& \rightharpoonup E^{\boldsymbol{\theta}}_{\alpha\beta}({\boldsymbol{u}})\text{ \ in } L^2(\omega) \text{ as } n\to \infty,\\
			F^{\boldsymbol{\theta}}_{\alpha\beta}({\boldsymbol{u}}^n)& \rightharpoonup F^{\boldsymbol{\theta}}_{\alpha\beta}({\boldsymbol{u}})\text{ \ in } L^2(\omega) \text{ as } n\to \infty.
			\endaligned\end{equation*}
		From this and the positive-definiteness of the elasticity tensor $(a_{\boldsymbol{\theta}}^{\alpha\beta\sigma\tau})$ given in \eqref{s2e2}, it is easy to see that
		\begin{equation*}
			J_{\boldsymbol{\theta}}({\boldsymbol{u}})\leq \liminf_{n\to\infty} J_{\boldsymbol{\theta}}({\boldsymbol{u}}^n).
		\end{equation*}
	\end{proof}
	
	The main result of this paper is as follows:
	
	\begin{thm}
		\label{s3t1}
		Let $\omega\subset{\mathbb{R}}^2$ be a bounded and connected open set whose boundary is Lipschitz-continuous, and let $\boldsymbol{\theta}_0 : \overline{\omega} \to \mathbb{R}^3$ be defined by
		$$
		\boldsymbol{\theta}_0 (y) := (y,0) \textrm{ for all } y \in \overline{\omega}.
		$$
		Let $\boldsymbol{p} = (p_i) \in (L^2(\omega))^3$ be the density of an arbitrary applied force. Then the following hold:
		\begin{itemize}
			\item[(i)] There exists a positive number $\delta_0$ such that, for every immersion $\boldsymbol{\theta}$ of class $\mathcal{C}^2$ satisfying 
			$$\| \boldsymbol{\theta} - \boldsymbol{\theta}_0\|_{\mathcal{C}^2(\overline{\omega})} < \delta_0,$$
			the problem \hyperref[shallow]{$\mathcal{P}_{shallow, \boldsymbol{\theta}}$} admits a solution $\boldsymbol{u}_{\boldsymbol{\theta}}$, and the family $(\boldsymbol{u}_{\boldsymbol{\theta}})_{\boldsymbol{\theta}}$ is bounded in $\boldsymbol{V}(\omega)$.
			\item[(ii)] For any sequence $(\boldsymbol{\theta}_n)_n$ converging to $\boldsymbol{\theta}_0$ in $\mathcal{C}^2(\overline{\omega})$, any weakly convergent subsequence of $(\boldsymbol{u}_{\boldsymbol{\theta}_n})_n$ converges strongly in $\boldsymbol{V}(\omega)$ to one of the solutions of problem \hyperref[plate]{$\mathcal{P}_{plate}$}. Consequently, if \hyperref[plate]{$\mathcal{P}_{plate}$} admits a unique solution $\boldsymbol{u}_0$, then the entire sequence $(\boldsymbol{u}_{\boldsymbol{\theta}_n})_n$ strongly converges to $\boldsymbol{u}_0$.
		\end{itemize}
	\end{thm}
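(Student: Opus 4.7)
I would apply the direct method of the calculus of variations to obtain the minimizers $\boldsymbol{u}_{\boldsymbol{\theta}}$ in (i), and then perform a $\Gamma$-convergence-type passage to the limit for (ii), upgraded to strong convergence via convergence of energies. The key ingredient is a uniform a priori bound on minimizers, established by contradiction using the rigidity of plates (Lemma \ref{s3l1}). The structural reason the argument works is that at $\boldsymbol{\theta}=\boldsymbol{\theta}_0$ one has $\Gamma^\sigma_{\alpha\beta,\boldsymbol{\theta}_0}=0$, $b_{\alpha\beta,\boldsymbol{\theta}_0}=0$, $a_{\boldsymbol{\theta}_0}^{\alpha\beta\sigma\tau}=a_0^{\alpha\beta\sigma\tau}$, $\sqrt{a_{\boldsymbol{\theta}_0}}=1$, and all of these quantities depend continuously on $\boldsymbol{\theta}$ in the $\mathcal{C}^2$-topology; in particular, for $\boldsymbol{\theta}_n\to\boldsymbol{\theta}_0$ we have uniform convergence $\Gamma^\sigma_{\alpha\beta,\boldsymbol{\theta}_n}\to 0$ and $b_{\alpha\beta,\boldsymbol{\theta}_n}\to 0$ on $\overline{\omega}$.

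\textbf{The main technical step: uniform bound via rigidity.} I claim there exist $\delta_0,M>0$ such that, for all $\boldsymbol{\theta}$ with $\|\boldsymbol{\theta}-\boldsymbol{\theta}_0\|_{\mathcal{C}^2}<\delta_0$, every minimizer $\boldsymbol{u}_{\boldsymbol{\theta}}$ of $J_{\boldsymbol{\theta}}$ satisfies $\|\boldsymbol{u}_{\boldsymbol{\theta}}\|_{\boldsymbol{V}}\leq M$; essentially the same argument establishes the coercivity needed to produce such minimizers in the first place. Arguing by contradiction, suppose $\boldsymbol{\theta}_n\to\boldsymbol{\theta}_0$ in $\mathcal{C}^2(\overline{\omega})$ and $\boldsymbol{u}^n$ are minimizers of $J_{\boldsymbol{\theta}_n}$ with $t^n:=\|\boldsymbol{u}^n\|_{\boldsymbol{V}}\to\infty$. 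From $J_{\boldsymbol{\theta}_n}(\boldsymbol{u}^n)\leq J_{\boldsymbol{\theta}_n}(\boldsymbol{0})=0$ and (\ref{s2e2}), one obtains $\int|F^{\boldsymbol{\theta}_n}(\boldsymbol{u}^n)|^2+\int|E^{\boldsymbol{\theta}_n}(\boldsymbol{u}^n)|^2\leq C(1+t^n)$. Setting $\boldsymbol{w}^n:=\boldsymbol{u}^n/t^n$, the bound on the $F$-term together with the smallness of $\Gamma^\sigma_{\alpha\beta,\boldsymbol{\theta}_n}$ and Poincaré's inequality gives $w_3^n\to 0$ strongly in $H^2_0(\omega)$. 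For the more delicate $E$-term, I would introduce the intermediate-scale function $z^n:=u_3^n/\sqrt{t^n}=\sqrt{t^n}\,w_3^n$, which is bounded in $H^2_0(\omega)$ (since $\|u_3^n\|^2_{H^2_0}\leq C(1+t^n)$), so up to extraction $z^n\rightharpoonup z$ in $H^2_0$ and $\partial_\alpha z^n\to\partial_\alpha z$ strongly in $L^4(\omega)$. A direct computation gives the identity $\tfrac{1}{t^n}E^{\boldsymbol{\theta}_n}_{\alpha\beta}(\boldsymbol{u}^n)=\tfrac{1}{2}(\partial_\beta w_\alpha^n+\partial_\alpha w_\beta^n)+\tfrac{1}{2}\partial_\alpha z^n\partial_\beta z^n+r^n_{\alpha\beta}$, where $r^n_{\alpha\beta}=-\Gamma^\sigma_{\alpha\beta,\boldsymbol{\theta}_n}w_\sigma^n-b_{\alpha\beta,\boldsymbol{\theta}_n}w_3^n\to 0$ in $L^2$; combined with $\|E^{\boldsymbol{\theta}_n}(\boldsymbol{u}^n)/t^n\|_{L^2}\to 0$, passage to the limit yields $\tfrac{1}{2}(\partial_\beta w_\alpha+\partial_\alpha w_\beta)+\tfrac{1}{2}\partial_\alpha z\partial_\beta z=0$, which is exactly $E^0_{\alpha\beta}(\widetilde{\boldsymbol{w}})=0$ for the augmented triple $\widetilde{\boldsymbol{w}}:=(w_1,w_2,z)\in\boldsymbol{V}(\omega)$. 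By Lemma \ref{s3l1}, $\widetilde{\boldsymbol{w}}=\boldsymbol{0}$, so $w_1=w_2=z=0$. Then $z=0$ forces $\partial z^n\to 0$ in $L^4$, hence $\partial_\alpha z^n\partial_\beta z^n\to 0$ in $L^2$, and the identity gives $\partial_\beta w_\alpha^n+\partial_\alpha w_\beta^n\to 0$ in $L^2$; Korn's inequality on $H^1_0(\omega;\mathbb{R}^2)$ then yields $w_\alpha^n\to 0$ in $H^1_0$. Together with $w_3^n\to 0$ in $H^2_0$ this gives $\boldsymbol{w}^n\to 0$ strongly in $\boldsymbol{V}(\omega)$, contradicting $\|\boldsymbol{w}^n\|_{\boldsymbol{V}}=1$. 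The introduction of the auxiliary scaling $\sqrt{t^n}$, used to capture the nonlinear term in the limit and to reduce the problem to the plate rigidity, is the main conceptual hurdle.

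\textbf{Existence (i) and passage to the limit (ii).} With the uniform bound in hand, Lemma \ref{s3l2} and the direct method yield (i). For (ii), given $\boldsymbol{\theta}_n\to\boldsymbol{\theta}_0$, extract $\boldsymbol{u}_{\boldsymbol{\theta}_n}\rightharpoonup\boldsymbol{u}^*$ in $\boldsymbol{V}(\omega)$; by the compact embeddings $H^1_0(\omega)\hookrightarrow L^2(\omega)$ and $H^2_0(\omega)\hookrightarrow W^{1,4}(\omega)$, the components converge strongly in those weaker norms. Uniform $\mathcal{C}^0$-convergence of the coefficients $a^{\alpha\beta\sigma\tau}_{\boldsymbol{\theta}_n},\sqrt{a_{\boldsymbol{\theta}_n}},\Gamma^\sigma_{\alpha\beta,\boldsymbol{\theta}_n},b_{\alpha\beta,\boldsymbol{\theta}_n}$ together with boundedness of $\boldsymbol{u}_{\boldsymbol{\theta}_n}$ in $\boldsymbol{V}$ implies both $|J_{\boldsymbol{\theta}_n}(\boldsymbol{u}_{\boldsymbol{\theta}_n})-J(\boldsymbol{u}_{\boldsymbol{\theta}_n})|\to 0$ and $J_{\boldsymbol{\theta}_n}(\boldsymbol{\eta})\to J(\boldsymbol{\eta})$ for every fixed $\boldsymbol{\eta}\in\boldsymbol{V}(\omega)$. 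Combining with Lemma \ref{s3l2} applied to $J$ gives $J(\boldsymbol{u}^*)\leq\liminf_n J_{\boldsymbol{\theta}_n}(\boldsymbol{u}_{\boldsymbol{\theta}_n})\leq\liminf_n J_{\boldsymbol{\theta}_n}(\boldsymbol{\eta})=J(\boldsymbol{\eta})$ for every $\boldsymbol{\eta}$, so $\boldsymbol{u}^*$ minimizes $J$. Taking $\boldsymbol{\eta}=\boldsymbol{u}^*$ upgrades this to $J_{\boldsymbol{\theta}_n}(\boldsymbol{u}_{\boldsymbol{\theta}_n})\to J(\boldsymbol{u}^*)$, hence $J(\boldsymbol{u}_{\boldsymbol{\theta}_n})\to J(\boldsymbol{u}^*)$; since both positive-definite quadratic integrals in $J$ are nonnegative and weakly lower semicontinuous, the convergence of their sum forces each to converge separately, and combined with weak convergence this yields $u_3^{\boldsymbol{\theta}_n}\to u_3^*$ in $H^2_0(\omega)$, whereupon subtracting the now strongly convergent quartic term from $E^0(\boldsymbol{u}_{\boldsymbol{\theta}_n})$ and invoking Korn's inequality gives $u_\alpha^{\boldsymbol{\theta}_n}\to u_\alpha^*$ in $H^1_0(\omega)$. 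Uniqueness of the plate minimizer, when it holds, selects the full sequence.
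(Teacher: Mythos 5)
Your proof is correct, and the crux of part (i) --- the contradiction argument with the normalization $\boldsymbol{w}^n=\boldsymbol{u}^n/t^n$, the auxiliary $\sqrt{t^n}$-scaling $z^n=u_3^n/\sqrt{t^n}$ to capture the quadratic nonlinearity in the limit, and the reduction to the plate rigidity of Lemma~\ref{s3l1} --- is exactly the paper's argument (the paper organizes it through the sublevel sets $\mathcal{A}_{\boldsymbol{\theta}}:=\{J_{\boldsymbol{\theta}}\le 0\}$ rather than directly through minimizers, but the estimate is the same). For part (ii), however, your route genuinely differs from the paper's in both sub-steps, and each has something to recommend it. To show that the weak limit minimizes $J$, the paper expands the elasticity tensor as a sum of nonnegative quadratic forms (its identity~\eqref{s3e19}) and passes to the limit term by term; you instead exploit the uniform bound on $\boldsymbol{u}_{\boldsymbol{\theta}_n}$ together with uniform $\mathcal{C}^0$-convergence of the coefficients to get $J_{\boldsymbol{\theta}_n}(\boldsymbol{u}_{\boldsymbol{\theta}_n})-J(\boldsymbol{u}_{\boldsymbol{\theta}_n})\to 0$, and then invoke weak lower semicontinuity of the fixed functional $J$; this is more economical and avoids the tensor decomposition entirely. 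To upgrade weak to strong convergence, the paper tests the Euler--Lagrange equation of $J_{\boldsymbol{\theta}_n}$ with $\boldsymbol{v}=\boldsymbol{u}_{\boldsymbol{\theta}_n}-\boldsymbol{u}^0$ and isolates a coercive bilinear form; you use the classical convergence-of-energies argument (taking $\boldsymbol{\eta}=\boldsymbol{u}^*$ as recovery sequence forces $J_{\boldsymbol{\theta}_n}(\boldsymbol{u}_{\boldsymbol{\theta}_n})\to J(\boldsymbol{u}^*)$, whence the two nonnegative, weakly lower semicontinuous quadratic integrals must each converge, and Radon--Riesz plus Korn finish). Your variational argument is a standard and clean $\Gamma$-convergence pattern; the paper's Euler--Lagrange argument is more computational but does not require decomposing $J$ into separately convergent pieces. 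Both are valid. One small expository point in your step 1: the claim that $z^n=u_3^n/\sqrt{t^n}$ is bounded in $H^2_0(\omega)$ rests on the $F$-term being uniformly coercive in $\|\partial_{\alpha\beta}u_3^n\|_{L^2}$, which requires absorbing the lower-order $\Gamma^\sigma_{\alpha\beta,\boldsymbol{\theta}_n}\partial_\sigma u_3^n$ term via Poincar\'e once $\|\Gamma\|_{\mathcal{C}^0}$ is small enough (the paper records this as~\eqref{s3e14}); you allude to this with ``the smallness of $\Gamma$,'' but it deserves an explicit sentence since without it the bound $\|u_3^n\|_{H^2_0}^2\lesssim 1+t^n$ does not follow directly from $\|F^{\boldsymbol{\theta}_n}(\boldsymbol{u}^n)\|_{L^2}^2\lesssim 1+t^n$.
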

	
	\begin{proof}
		\noindent(i) For readers' convenience, we will divide the proof into two steps.
		
		\noindent\textbf{Step 1:} For every immersion $\boldsymbol{\theta} \in \mathcal{C}^2(\overline{\omega})$, we define
		$$
		\mathcal{A}_{\boldsymbol{\theta}} := \big\{ \boldsymbol{\eta} = (\eta_i) \in \boldsymbol{V}(\omega); \, J_{\boldsymbol{\theta}}(\boldsymbol{\eta}) \leq 0\big\}.
		$$
		
		It is easy to see that $\mathcal{A}_{\boldsymbol{\theta}}$ is nonempty, since $\boldsymbol{0} \in \mathcal{A}_{\boldsymbol{\theta}}$. Next, we will prove that: There exist $\delta_0 > 0$ and $M_0 >0$ such that for every $\boldsymbol{\theta}$ satisfying
		$$
		\| \boldsymbol{\theta} - \boldsymbol{\theta}_0\|_{\mathcal{C}^2(\overline{\omega})} < \delta_0,
		$$
		the sets $\mathcal{A}_{\boldsymbol{\theta}}$ are bounded by $M_0$.
		
		We will prove by contradiction. Assume that for every $n \in \mathbb{N}$, there exists an immersion $\boldsymbol{\theta}_n \in \mathcal{C}^2 (\overline{\omega})$ satisfying
		$$
		\| \boldsymbol{\theta}_n - \boldsymbol{\theta}_0\|_{\mathcal{C}^2(\overline{\omega})} < \dfrac{1}{n},
		$$
		such that there exists $\boldsymbol{u}^n = (u_i^n) \in \mathcal{A}_{\boldsymbol{\theta}_n}$ satisfying
		$$
		\| \boldsymbol{u}^n\|_{\boldsymbol{V}(\omega)} = \lambda_n \geq n.
		$$
		Here
		$$
		\|\boldsymbol{u}\|_{\boldsymbol{V}(\omega)} := \|u_1\|_{H^1_0 (\omega)} + \|u_2\|_{H^1_0 (\omega)} + \|u_3\|_{H^2_0 (\omega)},
		$$
		for every $\boldsymbol{u} = (u_i) \in \boldsymbol{V}(\omega)$.
		
		It follows from the definition of $\mathcal{A}_{\boldsymbol{\theta}_n}$ that
		\begin{align}\label{s3e3}
			\nonumber
			&\frac{1}{2} \int\limits_{\omega} \Big\{\frac{\varepsilon^3}{3} a_{\boldsymbol{\theta}_n}^{\alpha \beta \sigma \tau} F^{\boldsymbol{\theta}_n}_{\sigma \tau} (\boldsymbol{u}^n) F^{\boldsymbol{\theta}_n}_{\alpha \beta}(\boldsymbol{u}^n) + \varepsilon a_{\boldsymbol{\theta}_n}^{\alpha \beta \sigma \tau} E^{\boldsymbol{\theta}_n}_{\sigma\tau}(\boldsymbol{u}^n)E^{\boldsymbol{\theta}_n}_{\alpha\beta}(\boldsymbol{u}^n) \Big\} \sqrt{a_{\boldsymbol{\theta}_n}} \, dy \\
			& \leq \int\limits_{\omega} p^i u^n_i \sqrt{a_{\boldsymbol{\theta}_n}} \, dy. 
		\end{align}
		
		Notice that $\boldsymbol{\theta}_n$ and $\boldsymbol{\theta}_0$ are immersions of class $\mathcal{C}^2$, hence it can be easily seen that there exists a number $N_1>0$ such that there exists a constant $C_e > 0$ so that, for every $n>N_1$, we have
		\begin{equation}\label{s3e4}
			C_e \sum\limits_{\alpha,\beta} |t_{\alpha\beta}|^2 \leq  a_{\boldsymbol{\theta}_n}^{\alpha\beta\sigma\tau}(y) t_{\sigma\tau}t_{\alpha\beta} \sqrt{a_{\boldsymbol{\theta}_n}(y)},
		\end{equation}
		for all $y \in \overline{\omega}$ and all symmetric matrices $(t_{\alpha\beta})$. Also, it can be proved that there exists a number $N_2 >0$ such that there exists a constant $C_1>0$ so that, for every $n>N_2$, we have
		\begin{equation}
			\label{s3e5}
			\sqrt{a_{\boldsymbol{\theta}_n}} \leq C_1 \textrm{ for all } y \in \overline{\omega}.
		\end{equation}
		Then, from \eqref{s3e3}-\eqref{s3e5} and together with the H\"older's inequality, we deduce that there exists a constant $C_2 >0$ such that
		\begin{align}
			\label{s3e6}
			\nonumber
			&\sum\limits_{\alpha,\beta} \big( \|F^{\boldsymbol{\theta}_n}_{\alpha \beta}(\boldsymbol{u}^n)\|^2_{L^2(\omega)} + \|E^{\boldsymbol{\theta}_n}_{\alpha\beta}(\boldsymbol{u}^n)\|^2_{L^2(\omega)}\big) \\
			& \leq C_2 \max_{i=1,2,3} \|p^i\|_{L^2(\omega)} \sum\limits_{i=1}^3 \|u_i^n\|_{L^2(\omega)},
		\end{align}
		for every $n > \max \{N_1, N_2\}$. 
		
		Next, let 
		$$
		\tilde{\boldsymbol{u}}^n = (\tilde{u}_i^n) := \dfrac{\boldsymbol{u}^n}{\lambda_n} \textrm{ for all } n,
		$$
		we have
		\begin{equation}
			\label{s3e7}
			\|\tilde{\boldsymbol{u}}^n\|_{\boldsymbol{V}(\omega)} = 1 \textrm{ for all } n. 
		\end{equation}
		Then, it follows from the continuity of the embeddings $H^1_0(\omega) \hookrightarrow L^2(\omega)$ and $H^2_0(\omega) \hookrightarrow H^1_0(\omega)$ that
		\begin{equation}
			\label{s3e8}
			\sum\limits_{\alpha=1}^2 \| \tilde{u}_\alpha^n\|_{L^2(\omega)} + \| \tilde{u}_3^n\|_{H^1_0(\omega)} \leq C_3 \textrm{ for all } n,
		\end{equation}
		where $C_3$ is some positive constant.
		
		By dividing both sides of \eqref{s3e6} by $\lambda_n^2$ and using \eqref{s3e8} together with the fact that $\lambda_n \to \infty$ as $n \to \infty$, we obtain
		\begin{equation}
			\label{s3e9}
			\lim\limits_{n\to\infty} \sum\limits_{\alpha,\beta} \|F^{\boldsymbol{\theta}_n}_{\alpha \beta}(\tilde{\boldsymbol{u}}^n)\|_{L^2(\omega)} = 0,
		\end{equation}
		and
		\begin{equation}
			\label{s3e10}
			\lim\limits_{n\to\infty} \sum\limits_{\alpha,\beta} \big\|\dfrac{1}{\lambda_n}E^{\boldsymbol{\theta}_n}_{\alpha\beta}(\boldsymbol{u}^n)\big\|_{L^2(\omega)} = 0.
		\end{equation}
		
		Notice that
		$$
		\lim\limits_{n\to\infty} \|\Gamma^\sigma_{\alpha\beta, \boldsymbol{\theta}_n} \|_{\mathcal{C}^0(\overline{\omega})} = 0,
		$$
		and 
		$$
		\lim\limits_{n\to\infty} \|b_{\alpha\beta, \boldsymbol{\theta}_n} \|_{\mathcal{C}^0(\overline{\omega})} = 0.
		$$
		
		Then, it follows from \eqref{s3e8}, \eqref{s3e9}, and \eqref{s3e10} that
		\begin{equation}
			\label{s3e11}
			\lim\limits_{n\to\infty} \sum\limits_{\alpha,\beta} \|\partial_{\alpha\beta} \tilde{u}^n_3\|_{L^2(\omega)} = 0,
		\end{equation}
		and
		\begin{equation}
			\label{s3e12}
			\lim\limits_{n\to\infty} \sum\limits_{\alpha,\beta} \|\partial_\alpha \tilde{u}^n_\beta + \partial_\beta \tilde{u}^n_\alpha + \lambda_n\partial_\alpha\tilde{u}^n_3\partial_\beta\tilde{u}^n_3\|_{L^2(\omega)} = 0.
		\end{equation}
		The Poincar\'e inequality and \eqref{s3e11} implies that
		\begin{equation}
			\label{s3e13}
			\lim\limits_{n\to\infty}\|\tilde{u}^n_3\|_{H^2_0(\omega)} = 0.
		\end{equation}
		
		Next, it follows from the fact that  $\|\Gamma^\sigma_{\alpha\beta, \boldsymbol{\theta}_n} \|_{\mathcal{C}^0(\overline{\omega})} \to 0$ as $n\to 0$ and the Poincar\'e inequality that there exists $N_3 >0$ such that 
		\begin{equation}
			\label{s3e14}
			\sum\limits_{\alpha,\beta} \|\partial_{\alpha\beta} v - \Gamma^\sigma_{\alpha\beta, \boldsymbol{\theta}_n} \partial_\sigma v\|_{L^2(\omega)} \geq \dfrac{1}{2} \sum\limits_{\alpha,\beta}\|\partial_{\alpha\beta} v\|_{L^2(\omega)},
		\end{equation}
		for all $n>N_3$ and for all $v \in H^2_0(\omega)$. By using this, \eqref{s3e8} and by dividing both sides of \eqref{s3e6} for $\lambda_n$, we obtain
		\begin{equation}
			\label{s3e15}
			\dfrac{1}{2} \sum\limits_{\alpha,\beta}\|\sqrt{\lambda_n}\partial_{\alpha\beta}  \tilde{u}^n_3\|_{L^2(\omega)} \leq C_2C_3 \max\limits_{i=1,2,3} \|p^i\|_{L^2(\omega)},
		\end{equation}
		for all $n> N_0 := \max \{N_1, N_2, N_3\}$. Again, as a consequence of the Poincar\'e inequality, we deduce that the sequence 
		$$
		(\sqrt{\lambda_n} \tilde{u}^n_3)_n 
		$$
		is bounded in $H^2_0(\omega)$. Then, there exists a subsequence (still denoted by) $(\sqrt{\lambda_n} \tilde{u}^n_3)_n$ and a function $v \in H^2_0(\omega)$, such that
		$$
		\sqrt{\lambda_n} \tilde{u}^n_3 \rightharpoonup v \textrm{ in } H^2_0(\omega),
		$$
		and
		\begin{equation}
			\label{s3e16}
			\sqrt{\lambda_n} \tilde{u}^n_3 \to v \textrm{ in } W^{1,4}(\omega),
		\end{equation}
		as $n \to \infty$.
		
		The equality \eqref{s3e7} implies that the sequences $(\tilde{u}_1^n)_n$ and $(\tilde{u}_2^n)_n$ are bounded in $H^1_0(\omega)$. Thus, there exist subsequences (still denoted by) $(\tilde{u}_1^n)_n$ and $(\tilde{u}_2^n)_n$ and functions $v_1, v_2 \in H^1_0(\omega)$ such that
		$$
		\tilde{u}_1^n \rightharpoonup v_1 \textrm{ and } \tilde{u}_2^n \rightharpoonup v_2 \textrm{ in } H^1_0(\omega)
		$$
		as $n \to \infty$. Hence, it follows from this, \eqref{s3e12}, and \eqref{s3e16} that
		$$
		\sum\limits_{\alpha,\beta} \| \partial_\alpha v_\beta + \partial_\beta v_\alpha + \partial_\alpha v \partial_\beta v\|_{L^2(\omega)} = 0.
		$$
		Then, by using Lemma \ref{s3l1}, we deduce that
		$$
		v_1 = v_2 = v =0.
		$$
		Thanks to this and \eqref{s3e16}, we can further deduce that
		$$
		\lim\limits_{n\to\infty} \sum\limits_{\alpha,\beta} \| \partial_\alpha \tilde{u}^n_\beta + \partial_\beta \tilde{u}^n_\alpha \|_{L^2(\omega)} = 0.
		$$
		As a consequence of Korn's inequality, we have
		\begin{equation}
			\label{s3e17}
			\tilde{u}_1^n \to 0 \textrm{ and } \tilde{u}_2^n \to 0 \textrm{ in } H^1_0(\omega)
		\end{equation}
		as $n \to \infty$.
		
		Now, by combining \eqref{s3e13} and \eqref{s3e17}, we obtain
		$$
		\lim\limits_{n\to \infty} \|\tilde{\boldsymbol{u}}^n \|_{\boldsymbol{V}(\omega)} = 0,
		$$
		which contradicts \eqref{s3e7}. Hence, our statement holds.
		
		\noindent\textbf{Step 2:} Let $\delta_0$ and $M_0$ be given as in the beginning of Step 1. This means that any minimizing sequence $(\boldsymbol{v}^n)_n$ of $J_{\boldsymbol{\theta}}$ is bounded, provided that 
		$$
		\|\boldsymbol{\theta} - \boldsymbol{\theta}_0\|_{\mathcal{C}^2(\overline{\omega}} < \delta_0.
		$$
		Thus, it follows from the sequential weak lower semicontinuity of the functional $J_{\boldsymbol{\theta}}$ (cf. Lemma \ref{s3l2}) that the problem \hyperref[shallow]{$\mathcal{P}_{shallow, \boldsymbol{\theta}}$} admits a solution $\boldsymbol{u}_{\boldsymbol{\theta}}$.
		
		Since
		$$
		0 = J_{\boldsymbol{\theta}}(\boldsymbol{0}) \geq J_{\boldsymbol{\theta}}(\boldsymbol{u}_{\boldsymbol{\theta}}),
		$$
		we have that
		$$
		\boldsymbol{u}_{\boldsymbol{\theta}} \in \mathcal{A}_{\boldsymbol{\theta}}.
		$$
		Now, notice that the sets $\mathcal{A}_{\boldsymbol{\theta}}$ are bounded by $M_0$, our desired result follows. Our proof is complete. 
		
		\,
		
		\noindent(ii) The proof will be divided into two steps for readers' convenience. 
		
		\noindent\textbf{Step 1:} Let $(\boldsymbol{\theta}_n)_n$ be any sequence converging to $\boldsymbol{\theta}_0$ in $\mathcal{C}^2(\overline{\omega})$. Without loss of generality, we can assume that
		$$
		\|\boldsymbol{\theta}_n - \boldsymbol{\theta}_0\|_{\mathcal{C}^2(\overline{\omega})} < \delta_0,
		$$
		where $\delta_0$ is the positive number appearing as in Step 1 of the proof of Part (i). Thus, the sequence $(\boldsymbol{u}_{\boldsymbol{\theta}_n})_n$ is bounded in $\boldsymbol{V}(\omega)$.
		
		Consider any weakly convergent subsequence (still denoted by) $(\boldsymbol{u}_{\boldsymbol{\theta}_n})_n$ and let $\boldsymbol{u}^0 \in \boldsymbol{V}(\omega)$ be its weak limit. We will prove that $\boldsymbol{u}^0$ is a solution of problem \hyperref[plate]{$\mathcal{P}_{plate}$}. First, since
		$$
		\boldsymbol{\theta}_n \to \boldsymbol{\theta}_0 \textrm{ in } \mathcal{C}^2(\overline{\omega}) \textrm{ as } n \to \infty,
		$$
		we easily deduce that, for every $\boldsymbol{u} \in \boldsymbol{V}(\omega)$,
		\begin{equation}
			\label{s3e18}
			J(\boldsymbol{u}) = \lim\limits_{n\to\infty} J_{\boldsymbol{\theta}_n} (\boldsymbol{u}).
		\end{equation}
		
		Next, for every immersion $\boldsymbol{\theta} \in \mathcal{C}^2(\overline{\omega})$ and for every symmetric matrix $(t_{\alpha\beta})$, we have
		\begin{equation}
			\label{s3e19}
			a_{\boldsymbol{\theta}}^{\alpha\beta\sigma\tau} t_{\sigma\tau}t_{\alpha\beta} = \dfrac{4\lambda\mu}{\lambda+2\mu} \big(\sum\limits_{\alpha,\beta} a_{\boldsymbol{\theta}}^{\alpha\beta}t_{\alpha\beta}\big)^2 + 4\mu \textrm{Tr} \big( (a_{\boldsymbol{\theta}}^{\sigma\alpha}t_{\alpha\beta})^T(a_{\boldsymbol{\theta}}^{\sigma\alpha}t_{\alpha\beta})\big),
		\end{equation}
		where $\textrm{Tr}$ denotes the trace operator of square matrices, and $(a_{\boldsymbol{\theta}}^{\sigma\alpha}t_{\alpha\beta})$ is a $2 \times 2$ matrix whose component at its $\sigma$-row and $\beta$-column is $a_{\boldsymbol{\theta}}^{\sigma\alpha}t_{\alpha\beta}$.  
		
		Thanks to the compactness of the embeddings $H^1_0(\omega) \hookrightarrow L^2(\omega)$ and $H^2_0(\omega) \hookrightarrow W^{1,4}(\omega)$, we have
		\begin{equation}
			\label{s3ex1}
			u_{\alpha,\boldsymbol{\theta}_n} \to u^0_{\alpha} \textrm{ in } L^2(\omega) \textrm{ as } n \to \infty, 
		\end{equation}
		and 
		\begin{equation}
			\label{s3ex2}
			u_{3,\boldsymbol{\theta}_n} \to u^0_3 \textrm{ in } W^{1,4}(\omega) \textrm{ as } n \to \infty.
		\end{equation}
		This and the fact that $\boldsymbol{\theta}_n \to \boldsymbol{\theta}_0$ in $\mathcal{C}^2(\overline{\omega})$ imply
		\begin{equation}
			\label{s3e20}                   a_{\boldsymbol{\theta}_n}^{\alpha\beta}E^{\boldsymbol{\theta}_n}_{\alpha\beta}(\boldsymbol{u}_{\boldsymbol{\theta}_n}) \rightharpoonup \delta^{\alpha\beta}E^0_{\alpha\beta}(\boldsymbol{u}^0) \textrm{ in } L^2(\omega),
		\end{equation}
		\begin{equation}
			\label{s3e21}                       a_{\boldsymbol{\theta}_n}^{\alpha\beta}F^{\boldsymbol{\theta}_n}_{\alpha\beta}(\boldsymbol{u}_{\boldsymbol{\theta}_n}) \rightharpoonup \delta^{\alpha\beta}F^0_{\alpha\beta}(\boldsymbol{u}^0) \textrm{ in } L^2(\omega),
		\end{equation}
		\begin{equation}
			\label{s3e22}    a_{\boldsymbol{\theta}_n}^{\sigma\alpha}E^{\boldsymbol{\theta}_n}_{\alpha\beta}(\boldsymbol{u}_{\boldsymbol{\theta}_n}) \rightharpoonup \delta^{\sigma\alpha} E^0_{\alpha\beta}(\boldsymbol{u}^0) \textrm{ in } L^2(\omega),
		\end{equation}
		and
		\begin{equation}
			\label{s3e23}    a_{\boldsymbol{\theta}_n}^{\sigma\alpha}F^{\boldsymbol{\theta}_n}_{\alpha\beta}(\boldsymbol{u}_{\boldsymbol{\theta}_n}) \rightharpoonup \delta^{\sigma\alpha} F^0_{\alpha\beta}(\boldsymbol{u}^0) \textrm{ in } L^2(\omega).
		\end{equation}
		Then, it follows from \eqref{s3e19}-\eqref{s3e23} that
		\begin{equation}
			\label{s3e24}
			\liminf\limits_{n\to\infty} J_{\boldsymbol{\theta}_n}(\boldsymbol{u}_{\boldsymbol{\theta}_n}) \geq J(\boldsymbol{u}^0).
		\end{equation}
		Therefore, by combining \eqref{s3e18} and \eqref{s3e24}, we deduce that
		\begin{align*}
			J(\boldsymbol{u}) &= \lim\limits_{n\to\infty} J_{\boldsymbol{\theta}_n} (\boldsymbol{u}) = \liminf\limits_{n\to\infty} J_{\boldsymbol{\theta}_n} (\boldsymbol{u}) \\
			& \geq \liminf\limits_{n\to\infty} J_{\boldsymbol{\theta}_n} (\boldsymbol{u}_{\boldsymbol{\theta}_n}) \geq J(\boldsymbol{u}^0),
		\end{align*}
		for all $\boldsymbol{u} \in \boldsymbol{V}(\omega)$. Thus, $\boldsymbol{u}^0$ is a minimizer of the functional $J$ over $\boldsymbol{V}(\omega)$.
		
		\noindent\textbf{Step 2:} We will prove that
		\begin{equation}
			\label{s3e25}
			\boldsymbol{u}_{\boldsymbol{\theta}_n} \to \boldsymbol{u}^0 \textrm{ in } \boldsymbol{V}(\omega) \textrm{ as } n \to \infty.
		\end{equation}
		
		To this end, since $\boldsymbol{u}_{\boldsymbol{\theta}_n}$ is minimizer of $J_{\boldsymbol{\theta}_n}$, it is also a critical point of $J_{\boldsymbol{\theta}_n}$. This implies
		\begin{align}
			\label{s3e26}
			\nonumber
			&\int\limits_{\omega} \Big\{  \frac{\varepsilon^3}{3} a_{\boldsymbol{\theta}_n}^{\alpha \beta \sigma \tau} F_{\sigma\tau}^{\boldsymbol{\theta}_n}(\boldsymbol{u}_{\boldsymbol{\theta}_n})F_{\alpha\beta}^{\boldsymbol{\theta}_n}(\boldsymbol{v}) + \varepsilon a_{\boldsymbol{\theta}_n}^{\alpha \beta \sigma \tau} E^{\boldsymbol{\theta}_n}_{\sigma\tau}(\boldsymbol{u}_{\boldsymbol{\theta}_n})E'^{\boldsymbol{\theta}_n}_{\alpha\beta}(\boldsymbol{u}_{\boldsymbol{\theta}_n})(\boldsymbol{v}) \Big\} \sqrt{a_{\boldsymbol{\theta}_n}} \, dy \\
			& = \int\limits_{\omega} p^i v_i \sqrt{a_{\boldsymbol{\theta}_n}} \, dy, \textrm{ for every } \boldsymbol{v} \in \boldsymbol{V}(\omega),
		\end{align}
		where
		\begin{align*}
			E'^{\boldsymbol{\theta}_n}_{\alpha\beta}(\boldsymbol{u}_{\boldsymbol{\theta}_n})(\boldsymbol{v}) := & \dfrac{1}{2} \big(\partial_\beta v_{\alpha} + \partial_\alpha v_{\beta} \big) - \Gamma^\sigma_{\alpha\beta,\boldsymbol{\theta}_n}v_{\sigma} - b_{\alpha\beta,\boldsymbol{\theta}_n} v_{3} \\
			& + \dfrac{1}{2} \partial_\alpha u_{3,\boldsymbol{\theta}_n}\partial_\beta v_{3} + \dfrac{1}{2} \partial_\alpha v_{3}\partial_\beta u_{\boldsymbol{3,\theta}_n},
		\end{align*}
		for every $\boldsymbol{v} \in \boldsymbol{V}(\omega)$.
		
		Next, thanks to \eqref{s3ex1}, \eqref{s3ex2}, and the fact that $\boldsymbol{\theta}_n \to \boldsymbol{\theta}_0 \textrm{ in } \mathcal{C}^2(\overline{\omega})$, we have
		\begin{equation}
			\label{s3e27}
			\int\limits_{\omega} p^i (u_{i,{\boldsymbol{\theta}_n}} - u^0_i) \sqrt{a_{\boldsymbol{\theta}_n}} \, dy \to 0 \textrm{ as } n \to \infty.
		\end{equation}
		Also, recall that
		$$
		\boldsymbol{u}_{\boldsymbol{\theta}_n} \rightharpoonup \boldsymbol{u}^0 \textrm{ in } \boldsymbol{V}(\omega)
		$$
		as $n\to\infty$, we then easily obtain
		\begin{align}
			\label{s3e28}
			\int\limits_{\omega}   \frac{\varepsilon^3}{3} a_{\boldsymbol{\theta}_n}^{\alpha \beta \sigma \tau} F_{\sigma\tau}^{\boldsymbol{\theta}_n}(\boldsymbol{u}^0)F_{\alpha\beta}^{\boldsymbol{\theta}_n}(\boldsymbol{u}_{\boldsymbol{\theta}_n}-\boldsymbol{u}^0)  \sqrt{a_{\boldsymbol{\theta}_n}} \, dy \to 0,    
		\end{align}
		and
		\begin{align}
			\label{s3e29}
			\int\limits_{\omega}   \varepsilon a_{\boldsymbol{\theta}_n}^{\alpha \beta \sigma \tau} E_{\sigma\tau}^{\boldsymbol{\theta}_n}(\boldsymbol{u}^0)E'^{\boldsymbol{\theta}_n}_{\alpha\beta}(\boldsymbol{u}_{\boldsymbol{\theta}_n})(\boldsymbol{u}_{\boldsymbol{\theta}_n}-\boldsymbol{u}^0)  \sqrt{a_{\boldsymbol{\theta}_n}} \, dy \to 0,    
		\end{align}
		as $n\to\infty$. Now, let
		$$
		\boldsymbol{v} = \boldsymbol{u}_{\boldsymbol{\theta}_n} - \boldsymbol{u}^0,
		$$
		in \eqref{s3e26}, it follows from \eqref{s3ex1}, \eqref{s3ex2}, and \eqref{s3e27}-\eqref{s3e29} that
		\begin{align*}
			&\int\limits_{\omega}   \frac{\varepsilon^3}{3} a_{\boldsymbol{\theta}_n}^{\alpha \beta \sigma \tau} \partial_{\sigma\tau}({u}_{3,\boldsymbol{\theta}_n}-{u}^0_3)  \partial_{\alpha\beta}({u}_{3,\boldsymbol{\theta}_n}-{u}^0_3)\sqrt{a_{\boldsymbol{\theta}_n}} \, dy \\
			&+ \int\limits_{\omega}  \varepsilon a_{\boldsymbol{\theta}_n}^{\alpha \beta \sigma \tau} e_{\sigma\tau}(\boldsymbol{u}_{\boldsymbol{\theta}_n}-\boldsymbol{u}^0)  e_{\alpha\beta}(\boldsymbol{u}_{\boldsymbol{\theta}_n}-\boldsymbol{u}^0)\sqrt{a_{\boldsymbol{\theta}_n}} \, dy \\
			&\to 0, \textrm{ as } n \to \infty,
		\end{align*}
		where 
		$$
		e_{\alpha\beta}(\boldsymbol{v}) := \dfrac{1}{2} \big(\partial_\alpha v_\beta + \partial_\beta v_\alpha \big), \textrm{ for all } \boldsymbol{v} \in \boldsymbol{V}(\omega).
		$$
		Therefore, as a consequence of the positive-definiteness of the elasticity tensor $(a_{\boldsymbol{\theta}}^{\alpha\beta\sigma\tau})$ given in \eqref{s2e2} and Korn's inequality, we deduce that
		$$
		\sum\limits_{\alpha,\beta} \|\partial_{\alpha\beta}({u}_{3,\boldsymbol{\theta}_n}-{u}^0_3)\|_{L^2(\omega)} \to 0,
		$$
		and 
		$$
		\sum\limits_{\alpha} \big(\| \partial_{\alpha} (u_{1,\boldsymbol{\theta}_n} - u^0_1) \|_{L^2(\omega)} + \| \partial_{\alpha} (u_{2,\boldsymbol{\theta}_n}-u^0_2) \|_{L^2(\omega)}\big) \to 0,
		$$
		as $n \to \infty$. Hence, we obtain \eqref{s3e25}. Our proof is complete.
	\end{proof}
	
	\begin{rems}
		\label{s3r2}
		\begin{itemize}
			\item[(i)] In Theorem \ref{s3t1}, we have shown that, for every arbitrarily given density $\boldsymbol{p}$, the problem \hyperref[shallow]{$\mathcal{P}_{shallow, \boldsymbol{\theta}}$} admits a solution provided $\boldsymbol{\theta}$ is sufficiently close to $\boldsymbol{\theta}_0$. Notice that our analysis does not involve any rigidity property of shells, only the rigidity of the plates. Nevertheless, it should be emphasized that the question of given a fixed immersion $\boldsymbol{\theta}$, whether \hyperref[shallow]{$\mathcal{P}_{shallow, \boldsymbol{\theta}}$} admits a solution under general applied forces is still unsolved (see also Remark \ref{s3r1}). 
			\item[(ii)] Our result applies to more general boundary conditions, as the rigidity property of plates is established for a broader class of such conditions (see \cite{giamar}). 
			\item[(iii)] The uniqueness of minimizing solutions to the problem \hyperref[plate]{$\mathcal{P}_{plate}$} follows from results in \cite{bern} and \cite{cd}, provided the applied forces are sufficiently small. In subsequent work, we will investigate the uniqueness of solutions for the nonlinear shallow shell model (and, consequently, for the nonlinearly elastic plate model) for a broader class of applied forces, including those of arbitrarily large magnitude.
		\end{itemize}
	\end{rems}

	\section*{Declarations}
	The authors have no conflicts of interest to declare that are relevant to the content of this article.
	
	\section*{Data availability}
	Data sharing is not applicable to this article as no datasets were generated or analyzed during the current study.

	\section*{Acknowledgements} 
	The first author is supported by the grant PRIMUS/24/SCI/020 of Charles University, and within the frame of the project Ferroic Multifunctionalities (FerrMion) [project No. CZ.02.01.01/00/22\_008/0004591], within the Operational Programme Johannes Amos Comenius co-funded by the European Union (JS).

	% REFERENCES:
	%%%%%%%%%%%%%%%%%%%%%%%%%%%%
	\newcommand \auth{} 
	\newcommand \jour {} 
	\newcommand \book {}

\end{document}